\newtheorem{theorem}{Theorem}
\newtheorem{lemma}[theorem]{Lemma}
\newtheorem{corollary}[theorem]{Corollary}
\newtheorem*{conjecture}{\bf Conjecture}
\theoremstyle{remark}
\newtheorem*{remark}{Remark}
\numberwithin{theorem}{section} \numberwithin{equation}{section}
\newcommand{\C}{\mathbb{C}}
\newcommand{\Z}{\mathbb{Z}}
\newcommand{\im}{\textnormal{Im}}
\begin{document}
\title[Moonshine and Donaldson invariants of $\C \mathrm{P}^2$]
{Moonshine and Donaldson invariants of $\C \mathrm{P}^2$}
\author{Andreas Malmendier and Ken Ono}
\address{Department of Mathematics, Colby College,
Waterville, Maine 04901}
\email{andreas.malmendier@colby.edu}
\address{Department of Mathematics and Computer Science, Emory University,
Atlanta, Georgia 30322} \email{ono@mathcs.emory.edu}
\thanks{The second author thanks the NSF and the Asa Griggs Candler Fund for their generous support.}
\begin{abstract}
Eguchi, Ooguri, and Tachikawa 
recently conjectured \cite{EOT} a new  {\it moonshine} phenomenon.
They conjecture that the coefficients of a certain mock modular form $H(\tau)$, which arises 
from the $K3$ surface elliptic genus, are sums of dimensions of irreducible representations
of the Mathieu group $M_{24}$.
We prove that $H(\tau)$ surprisingly also plays a significant role in the theory of Donaldson invariants.
We prove that the Moore-Witten~\cite{MW} $u$-plane integrals for $H(\tau)$
are the $\mathrm{SO}(3)$-Donaldson invariants of $\C \mathrm{P}^2$. 
This result then implies a moonshine phenomenon where
these invariants conjecturally are  expressions in the dimensions of the irreducible
representations of $M_{24}$. Indeed, we obtain an explicit expression for the Donaldson
invariant generating function $\mathrm{Z}(p,S)$ in terms of the derivatives of $H(\tau)$.
\end{abstract}
\maketitle

\section{Introduction and statement of results}

This paper concerns the deep properties of the modular forms and mock modular forms
which arise from a study of the $K3$ surface elliptic genus.
To define these objects, we require
Dedekind's eta-function $\eta(\tau):=q^{\frac{1}{24}}\prod_{n=1}^{\infty}(1-q^n)$
(note. $\tau\in \mathbb{H}$ throughout and $q:=e^{2\pi i \tau}$), and the
classical Jacobi theta function 
\begin{displaymath}
 \vartheta_{ab}(v|\tau) := \sum_{n\in\mathbb{Z}} q^{\frac{(2n+a)^2}{8}} \; e^{\pi i \,(2n+a)(v+\frac{b}{2})},
\end{displaymath}
where
$a,b \in \lbrace 0,1\rbrace$ and $v\in\mathbb{C}$.
We recall some standard identities. 

\medskip

\label{JacobiTheta}
\begin{displaymath}
 \begin{array}{|l|l|l|}
 \hline  && \\[-2ex]
 \vartheta_1(v|\tau) = \vartheta_{11}(v|\tau)
& \vartheta_1(0|\tau) = 0
& \vartheta_1'(0|\tau) = - 2\pi \eta^3(\tau) \\ [1ex]
\hline && \\[-2ex]
  \vartheta_2(v|\tau) = \vartheta_{10}(v|\tau) 
& \vartheta_2(0|\tau) = \sum_{n\in\mathbb{Z}} q^{\frac{(2n+1)^2}{8}} 
& \vartheta_2'(0|\tau) = 0 \\ [1ex]
\hline && \\[-2ex]
  \vartheta_3(v|\tau) = \vartheta_{00}(v|\tau)
& \vartheta_3(0|\tau) = \sum_{n\in\mathbb{Z}} q^{\frac{n^2}{2}}
& \vartheta_3'(0|\tau) = 0 \\ [1ex]
\hline && \\[-2ex]
  \vartheta_4(v|\tau) = \vartheta_{01}(v|\tau)
& \vartheta_4(0|\tau) = \sum_{n\in\mathbb{Z}} (-1)^n \, q^{\frac{n^2}{2}}
& \vartheta_4'(0|\tau) = 0\\  [1ex] \hline
\end{array}
\end{displaymath}
\medskip

\noindent
Moreover, for convenience we let $\vartheta_j(\tau):=\vartheta_j(0|\tau)$ for $j=2,3,4$.

\medskip
The $K3$ surface elliptic genus \cite{EST} is given by
\begin{displaymath}
Z(z|\tau)=8\left[\left(\frac{\vartheta_2(z|\tau)}{
\vartheta_2(\tau)}\right)^2+\left(\frac{\vartheta_3(z|\tau)}{
\vartheta_3(\tau)}\right)^2+\left(\frac{\vartheta_4(z|\tau)}{
\vartheta_4(\tau)}\right)^2\right]. 
\end{displaymath}
This expression is obtained by an
orbifold calculation on $T^4/\mathbb{Z}_2$ in \cite{EOTY}.
Its specializations at $z=0$, $z=1/2$ and $z=(\tau+1)/2$ 
gives the classical topological invariants
$\chi$=24,\,$\sigma$=16 and $\hat{A}=-2$ respectively.
Here we consider the following alternate representation obtained by
Eguchi and Hikami \cite{EguchiHikami} motivated by superconformal field theory:
\begin{displaymath}
Z(z|\tau)= \frac{\vartheta_1(z|\tau)^2}{\eta(\tau)^3} \, \Big(  24 \, \mu(z;\tau)  + H(\tau) \Big).
\end{displaymath}
Here $H(\tau)$ is defined by
\begin{equation} \label{H}
  H(\tau)  :=
   - 8 \hskip-3mm  \sum_{
    w \in \left\{
      \frac{1}{2} , \frac{1+\tau}{2} , \frac{\tau}{2}
    \right\}
  }\hskip-3mm 
  \mu(w; \tau)
  =
  2q^{-\frac{1}{8}}\left(
    - 1 + \sum_{n = 1}^\infty A_n \, q^n
  \right) ,
\end{equation}
where $\mu(z;\tau)$ is the famous function 
\begin{displaymath}
  \label{define_M}
  \mu(z;\tau)
  = 
  \frac{
    i \, e^{\pi i z} 
  }{
    \vartheta_{1}(z|\tau)}
  \sum_{n \in \mathbb{Z}}
  (-1)^n \,
  \frac{
    q^{\frac{1}{2} n(n+1)} \, e^{2 \pi i  n z}
  }{
    1 - q^n \, e^{2 \pi i z}
  } 
\end{displaymath}
defined by Zwegers \cite{Zwegers} in his thesis on Ramanujan's mock theta functions.

As explained in \cite{EguchiHikami}, $H(\tau)$ is the holomorphic part of a weight 1/2
harmonic Maass form, a so-called {\it mock modular form}.
Its first few coefficients $A_n$ are:
\smallskip

\begin{displaymath}
  \begin{array}{|c|rrrrrrrrr|cl}\hline
    n & 1 & 2 & 3 & 4 & 5 & 6 & 7 & 8 & \cdots   \\
    \hline
    A_n& 45 & 231 & 770 & 2277 & 5796 & 13915 &
    30843 & 65550 &\cdots \\ \hline
  \end{array}
\end{displaymath}
\smallskip

\noindent
Amazingly, Eguchi, Ooguri, and Tachikawa \cite{EOT} 
recognized these numbers as sums of dimensions of the irreducible representations of
the Mathieu group $M_{24}$. Indeed, the dimensions of the irreducible representations are (in increasing order):
\begin{displaymath}
\begin{split}
1, 23, 45, 45,\  &231, 231, 252, 253, 483, 770, 770, 990, 990, 1035, 1035, 1035,\\
&1265, 1771, 2024, 2277, 3312, 3520, 5313, 5544, 5796, 10395.
\end{split}
\end{displaymath}
One sees that $A_1, A_2, A_3, A_4$ and $A_5$ are dimensions, while
$$
A_6=3520+10395 \ \ \ \ \ {\text {\rm and}}\ \ \ \ \ A_7=10395+5796+5544+5313+2024+1771.
$$
We have their ``moonshine'' conjecture\footnote{This 
is analogous to the ``Monstrous Moonshine" conjecture by Conway and Norton
which related the coefficients of Klein's $j$-function
to the representations of the Monster \cite{Conway}. By work of Borcherds (and others)
\cite{Borcherds}, Monstrous Moonshine is now understood.} -- also referred to as ``umbral moonshine'' \cite{CDH}:
\begin{conjecture}[Moonshine]
The Fourier coefficients $A_n$ of $H(\tau)$ are given as special sums\footnote{As in the case of the Montrous Moonshine Conjecture,
there are many representations of the generic $A_n$, and so the proper formulation of this conjecture requires a precise description of
these sums \cite{GV}.}  of dimensions of irreducible representations of
the simple sporadic group $M_{24}$.
\end{conjecture}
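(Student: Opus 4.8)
The plan is to prove the conjecture by upgrading the formal ``moonshine'' into an honest graded $M_{24}$-module. Concretely, I would seek a graded representation $V = \bigoplus_{n \geq 1} V_n$ whose graded dimensions recover the coefficients, $\dim V_n = A_n$. Decomposing each $V_n$ into irreducibles, $V_n = \bigoplus_\rho \rho^{\oplus m_{\rho,n}}$, immediately yields $A_n = \sum_\rho (\dim \rho)\, m_{\rho,n}$, which is precisely the assertion that each $A_n$ is a special sum of dimensions of the irreducible representations of $M_{24}$. Thus the whole problem reduces to producing the integers $m_{\rho,n}$ and showing that they are \emph{non-negative}.

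First I would write down, for each of the $26$ conjugacy classes $[g]$ of $M_{24}$, a candidate McKay--Thompson series $H_g(\tau) = 2q^{-1/8}\bigl(-1 + \sum_{n\geq 1} A_n(g)\, q^n\bigr)$, where $A_n(g)$ is to play the role of the graded trace $\tr(g \mid V_n)$ and $A_n(e) = A_n$. These series are not free parameters: the structure of $Z(z|\tau)$ forces each $H_g$ to be a weight $1/2$ mock modular form on $\Gamma_0(N_g)$ with a prescribed multiplier system, with the universal polar part $-2q^{-1/8}$, and with shadow a fixed multiple of $\eta(\tau)^3$ governed by the number of fixed points of $g$ in the $24$-point permutation representation. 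I would pin each $H_g$ down from its polar part, its level, and finitely many low-order coefficients, using the finite-dimensionality of the relevant space of mock modular forms to establish both existence and uniqueness.

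Given the $H_g$, the candidate multiplicities are recovered by character orthogonality, $m_{\rho,n} = \frac{1}{|M_{24}|}\sum_{g \in M_{24}} \overline{\chi_\rho(g)}\, A_n(g)$, a finite rational-linear combination of the Fourier coefficients of the $H_g$. Integrality of the $m_{\rho,n}$ I would obtain by assembling the $H_g$ into a single vector-valued mock modular form and showing that each multiplicity generating series $\sum_n m_{\rho,n}\, q^{n}$ again transforms with a multiplier of bounded denominator; the arithmetic of that multiplier system, together with the fact that all shadows are built from $\eta(\tau)^3$, then forces the coefficients into $\Z$.

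The hard part will be \emph{non-negativity} of the $m_{\rho,n}$, required for all $n$ and all $26$ irreducibles $\rho$ simultaneously. Here I would pass to the harmonic Maass completions of the multiplicity functions and carry out a Rademacher-type analysis of their coefficients: the aim is an exact or asymptotic formula whose dominant term is positive and grows in $n$, with an \emph{effective} error estimate, so that $m_{\rho,n} > 0$ for every $n$ beyond an explicit threshold $n_0(\rho)$, with the finitely many remaining coefficients $n < n_0(\rho)$ then verified by direct computation. The genuine obstacle is making the Rademacher machinery effective in the mock setting, since the non-holomorphic Eichler-integral correction coming from the shadow must be controlled uniformly across all conjugacy classes and all characters; surmounting this is exactly what converts the formal virtual character into an honest, non-negatively graded $M_{24}$-module and thereby proves the conjecture.
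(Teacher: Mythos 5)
There is no proof in the paper to compare against: the statement you are addressing is stated as the Eguchi--Ooguri--Tachikawa \emph{conjecture}, attributed to \cite{EOT} and left open. The paper's actual theorems concern the $u$-plane integrals $\pmb{\mathrm{D}}_{m,2n}[H(\tau)]$ and Donaldson invariants, and they treat the moonshine statement only as a conditional input (``the Moonshine Conjecture implies that these Donaldson invariants are given explicitly in terms of the dimensions\dots''). So the honest assessment is of your proposal on its own terms.

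Your reformulation is the right one: the literal statement ``$A_n$ is a sum of dimensions of irreducibles'' is nearly vacuous (the trivial representation has dimension $1$), and the paper's own footnote concedes that the precise formulation requires specifying the sums. Recasting the conjecture as the existence of a graded $M_{24}$-module $V=\bigoplus_n V_n$ with $\dim V_n = A_n$, determined by a full set of McKay--Thompson series $H_g$, is exactly the correct strengthening, and your three-stage plan (construct the $H_g$ from their mock modular constraints; extract multiplicities $m_{\rho,n}$ by orthogonality and prove integrality; prove non-negativity by an effective Rademacher expansion) is in fact the strategy by which the conjecture was eventually settled in the literature. But as written this is a research program, not a proof: every decisive step is announced rather than carried out. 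You do not exhibit the $H_g$ or prove they are uniquely determined by polar part, level, multiplier, and shadow (the uniqueness claim needs a genuine argument about the relevant finite-dimensional spaces of mock modular forms, and the multiplier systems at nontrivial level are delicate); you do not prove integrality of the $m_{\rho,n}$; and for non-negativity you correctly identify the obstacle --- effective, uniform control of the non-holomorphic Eichler-integral corrections in the circle method across all $26$ classes and characters --- but offer no mechanism for overcoming it. Since that last step is precisely where the entire difficulty of the conjecture is concentrated, the proposal as it stands does not constitute a proof of the statement.
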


Here we prove that the coefficients of $H(\tau)$ encode further deep information. 
We compute the numbers $\pmb{\mathrm{D}}_{m,2n}[H(\tau)]$, the Moore-Witten \cite{MW}  $u$-plane integrals for $H(\tau)$,
and we prove that they are, up to a multiplicative factor of 12, the
$SO(3)$-Donaldson invariants for $\C \mathrm{P}^2$.
These invariants are a sequence of rational numbers which together form a diffeomorphism class invariant for
$\C \mathrm{P}^2$  (for background see \cite{DonaldsonKronheimer, Goettsche, GoettscheZagier, MO}).

\begin{theorem}\label{mainthm}
For all $m,n \in \mathbb{N}_0$, the
$\mathrm{SO}(3)$-Donaldson invariants $\pmb{\Phi}_{m,2n}$ for $\mathbb{C}\mathrm{P}^2$
satisfy
\begin{displaymath}
 12 \,  \pmb{\Phi}_{m,2n} = \pmb{\mathrm{D}}_{m,2n}[H(\tau)] \;.
\end{displaymath}
\end{theorem}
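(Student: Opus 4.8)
The plan is to evaluate both sides in closed form in terms of the Fourier coefficients $A_n$ of $H(\tau)$ and to observe that the two resulting expressions coincide after multiplication by $12$. By its Moore--Witten definition \cite{MW}, the right-hand side $\pmb{\mathrm{D}}_{m,2n}[H(\tau)]$ is a regularized integral over a fundamental domain $\mathcal{F}\subset\H$ for the Seiberg--Witten monodromy group (a congruence subgroup with three cusps), whose integrand is assembled from three ingredients: the Siegel--Narain theta function $\Psi$ attached to the lattice $H^2(\C\mathrm{P}^2;\Z)$ with its single period point, the measure $d\mu$ coming from the Seiberg--Witten curve, and insertions built from $H$ together with the point class $p$ and surface class $S$, encoded by the indices $m$ and $2n$. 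Since $H$ is only \emph{mock} modular, the naive integrand is not invariant under this group, so the first step is to replace $H$ by Zwegers' harmonic completion $\widehat H$ \cite{Zwegers}, whose modular transformation is governed by an explicit $\eta^3$-type unary theta shadow. This makes the full integrand a genuine non-holomorphic modular object, at the cost of a controlled correction term.

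Next I would evaluate the integral by the Stokes/holomorphic-anomaly method. There are now two sources of non-holomorphicity: the theta function $\Psi$ (forced on us because $b_2^+=1$ for $\C\mathrm{P}^2$) and the completion $\widehat H$. Using that $\mathrm{Im}\,\tau$-weighted $\overline{\Psi}$ can be written as $\partial_{\bar\tau}$ of an error-function-completed theta, and that $\partial_{\bar\tau}\widehat H$ is essentially the shadow times $(\mathrm{Im}\,\tau)^{-3/2}$, the two-dimensional integral over $\mathcal{F}$ collapses by Stokes' theorem to one-dimensional contributions along $\partial\mathcal{F}$, i.e.\ to constant-term/residue extractions at the three cusps. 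Carrying this out converts $\pmb{\mathrm{D}}_{m,2n}[H(\tau)]$ into an explicit finite $\Q$-linear combination of the $A_j$ (equivalently, of values of derivatives of $H$ at the cusps), with the weight and measure factors producing overall powers of $2\pi$, $\eta^3$, and the discriminant $\eta^{24}$.

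In parallel, I would expand the left-hand side using the Göttsche--Zagier description \cite{GoettscheZagier} of the $SO(3)$-Donaldson invariants of $\C\mathrm{P}^2$: for a four-manifold with $b_2^+=1$ these invariants are governed by a wall-crossing/theta-function generating series expressible through the same (mock) modular data. Extracting the coefficient of $p^m S^{2n}$ from the Donaldson generating function $\mathrm{Z}(p,S)$ then yields another explicit combination of the $A_j$. The final step is a term-by-term comparison of the two expressions; the universal factor $12$ should emerge from the normalization mismatch between the physical ($u$-plane) measure and the geometric (Donaldson) pairing, concretely from the relation $\vartheta_1'(0|\tau)=-2\pi\eta^3(\tau)$ and the weight-$12$ discriminant that enter the two conventions differently.

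I expect the main obstacle to be the rigorous evaluation of the non-holomorphic $u$-plane integral. The genuine difficulty is the mock modularity of $H$: one must use $\widehat H$ to make the integrand modular and to justify Stokes' theorem, and then show that the extra non-holomorphic pieces either cancel against the theta-shadow contribution or reassemble into the correct holomorphic boundary terms. Tracking the contributions of all three cusps simultaneously, together with the $p$- and $S$-dependent insertions, is where the bookkeeping is most delicate, and the appearance of the clean constant $12$ is precisely the signal that this bookkeeping has been carried out correctly.
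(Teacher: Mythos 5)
Your plan is a program for reproving the Moore--Witten evaluation of the $u$-plane integral from scratch for $H(\tau)$ and then matching it against a G\"ottsche--Zagier wall-crossing computation of $\pmb{\Phi}_{m,2n}$. That is not what the paper does, and as written it has a genuine gap: everything after ``the first step'' is a description of work to be done (Stokes on the completed integrand, cusp-by-cusp residue extraction, wall-crossing on the Donaldson side), and that work is precisely the content of the authors' earlier paper \cite{MO} --- it is not carried out, and nothing in your outline explains why the two explicit $\Q$-linear combinations of the $A_j$ you would obtain should agree. The paper avoids all of this by a short reduction. It takes as input Theorem~\ref{koeffizienten} (the main theorem of \cite{MO}), which already identifies $\pmb{\Phi}_{m,2n}$ with $\pmb{\mathrm{D}}_{m,2n}[\mathcal{Q}^+(\tau/8)]$ for one particular mock modular form $\mathcal{Q}^+$ with shadow $\eta^3$. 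The only new ingredient needed is Theorem~\ref{kernel}: the functional $\pmb{\mathrm{D}}_{m,2n}$, which in this paper is \emph{defined} by the constant-term formula (\ref{uplane0}) and is linear in its argument, annihilates the difference $\mathcal{Q}^+(\tau/8)-M^+(\tau)$ of any two holomorphic parts of harmonic Maass forms with the same shadow $\eta^3$. This is because that difference is weakly holomorphic (Corollary~\ref{CorDiff}), and the relevant constant-term expression is then an exact derivative $q\frac{d}{dq}$ of a polynomial in the hauptmodul-like function $\widehat{Z_0}$, hence has vanishing constant term. Your outline contains no analogue of this ``shadow determines the answer'' step, which is the actual mathematical content of the theorem.

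Your explanation of the factor $12$ is also wrong. It does not come from a normalization mismatch between the physical measure and the Donaldson pairing, nor from $\vartheta_1'(0|\tau)=-2\pi\eta^3(\tau)$ or the weight-$12$ discriminant. It comes from the shadow normalization of $H$ itself: since $H(\tau)=-8\sum_{w}\mu(w;\tau)$ over the three half-periods, its completion has shadow $8\cdot 3\cdot\eta(\tau)^3/2=12\,\eta(\tau)^3$, so the correctly normalized input to Theorems~\ref{koeffizienten} and~\ref{kernel} is $H(\tau)/12$, and linearity of (\ref{uplane0}) gives $\pmb{\Phi}_{m,2n}=\pmb{\mathrm{D}}_{m,2n}[H(\tau)/12]=\tfrac{1}{12}\pmb{\mathrm{D}}_{m,2n}[H(\tau)]$. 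In your scheme the $12$ would have to fall out of a delicate comparison of two long computations, and misattributing its origin is exactly the kind of error that would make that comparison fail.
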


\begin{remark} The $u$-plane integrals $\pmb{\mathrm{D}}_{m,2n}[H(\tau)]$ are given explicitly in terms of
the coefficients of $H(\tau)$ (see \ref{uplane0}). Therefore, 
the Eguchi-Ooguri-Tachikawa Moonshine
Conjecture implies that these Donaldson invariants are given explicitly in terms of the dimensions of
the irreducible representations of $M_{24}$. We will discuss the numerical identities implied by Theorem \ref{mainthm}
in Section \ref{sec4}. We also describe the Donaldson invariant generating function in terms of derivatives
of $H(\tau)$.
\end{remark}

This paper builds upon earlier work by the authors \cite{MO} on the Moore-Witten Conjecture for $\C \mathrm{P}^2$. We shall
make substantial
use of the results in that paper, and 
we will recall the main facts that we need to prove Theorem~\ref{mainthm}.

In Section~\ref{mock} we recall basic facts about those weight 1/2
harmonic Maass forms whose shadow is the cube of Dedekind's eta-function.
In Section~\ref{donaldsonsection} we recall and apply the main results from \cite{MO}.
In particular, we recall the relationship between the
$u$-plane integrals for such forms and the $SO(3)$-Donaldson invariants for $\C \mathrm{P}^2$. We then
 conclude with the proof of Theorem~\ref{mainthm}.

\section{Certain harmonic Maass forms}\label{mock}

We let
$M(\tau)$ be a  weight 1/2 harmonic Maass form\footnote{These forms were first defined by
Bruinier and Funke \cite{BF} in their work on geometric theta lifts.}
(for definitions see \cite{BF, OnoCDM, ZagierBourbaki}) for $\Gamma(2)\cap \Gamma_0(4)$
 whose
shadow\footnote{The term {\it shadow} was coined by Zagier
in \cite{ZagierBourbaki}.}  is $\eta(\tau)^3$.
Namely, we have that
\begin{equation}\label{DE}
 \sqrt{2} i \; \frac{d}{d\bar{\tau}} \; M\left(\tau\right)= \frac{1}{\sqrt{\im \tau}} \, \overline{\eta^3(\tau)} \;.
\end{equation}
For such $M(\tau)$, we write $M(\tau) = M^+(\tau) + M^-(\tau)$, where the {\it holomorphic part},   a {\it mock modular form}, is
$M^+(\tau)=q^{-1/8} \, \sum_{n\ge0} H_n \, q^{n/2}$.
The {\it non-holomorphic part} $M^{-}(\tau)$ is
\begin{displaymath}
\begin{split}
M^{-}\left(\tau\right)  =
- \frac{2i}{\sqrt{\pi}} \; \sum_{l \ge 0} (-1)^l \; \Gamma\left( \frac{1}{2} , \pi \, \frac{(2l+1)^2}{2} \, \im\tau \right) \; q^{-\frac{(2l+1)^2}{8}},
\end{split}
\end{displaymath}
where $\Gamma(1/2,t)$ is the incomplete Gamma function. This follows from Jacobi's identity
$$
\eta(\tau)^3=q^{\frac{1}{8}}\sum_{n=0}^{\infty}(-1)^n(2n+1)q^{\frac{n^2+n}{2}}.
$$

\begin{remark}
Note that the non-holomorphic part $M^{-}(\tau)$ is the same for
every weight 1/2 harmonic Maass form with shadow $\eta^3(\tau)$ since
this part is
obtained as the ``Eichler-Zagier'' integral of the shadow.
However, the holomorphic part is not uniquely determined.
It is unique up to the addition of a
{\it weakly holomorphic modular form}, a form whose poles (if any) are supported at cusps. 
\end{remark}

The next result gives families of modular forms from such an $M(\tau)$ using Cohen brackets. To make this precise, we recall the two Eisenstein
series
\begin{displaymath}
E_2(\tau):=1-24\sum_{n=1}^{\infty}\sum_{d\mid n}dq^n \ \ \ {\text {\rm and}}\ \ \
\widehat{E}_2(\tau):=E_2(\tau)-\frac{3}{\pi \im \tau}.
\end{displaymath}

The authors proved the following lemma in \cite{MO}.
\begin{lemma}\label{EkQ}{\text {\rm [Lemma 4.10 of \cite{MO}]}}
Assuming the hypotheses above, we have that
\begin{equation*}
 \mathcal{E}^k_{\frac{1}{2}} \left[ M(\tau) \right] := \sum_{j=0}^k (-1)^j \; \binom{k}{j} \; \frac{\Gamma\left(\frac{1}{2}\right)}{\Gamma\left(\frac{1}{2}+j\right)} \; 2^{2j} \; 3^j
 \; E_2^{k-j}(\tau) \; \left(q \, \frac{d}{dq} \right)^j M\left(\tau\right)
\end{equation*}
is modular of weight $2k+1/2$ for $\Gamma(2)\cap \Gamma_0(4)$, and it satisfies
\begin{displaymath}
 \sqrt{2} i \; \frac{d}{d\bar{\tau}}\; \mathcal{E}^k_{\frac{1}{2}} \left[ M(\tau) \right]  = \frac{1}{\sqrt{\im \tau}} \, \widehat{E}_2^k(\tau) \; \overline{\eta^3(\tau)} \;.
\end{displaymath}
\end{lemma}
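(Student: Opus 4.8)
The plan is to verify the two assertions—modularity of weight $2k+\frac12$ and the stated shadow identity—by a direct computation in which the coefficients $c_{k,j}:=(-1)^j\binom kj\frac{\Gamma(1/2)}{\Gamma(1/2+j)}2^{2j}3^j$ turn out to be tuned precisely so that all transformation anomalies telescope. Throughout I write $D:=q\,\frac{d}{dq}=\frac{1}{2\pi i}\frac{d}{d\tau}$, note that $D$ differentiates only the holomorphic variable, and for $\gamma=\sm{a}{b}{c}{d}$ put $\delta:=\frac{c}{2\pi i(c\tau+d)}$, so that $D(c\tau+d)=\frac{c}{2\pi i}$ and $D\delta=-\delta^2$.

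First I would record the transformation laws. Since $M$ is a genuine (non-holomorphic) modular form of weight $\frac12$ for $\Gamma(2)\cap\Gamma_0(4)$, we have $M(\gamma\tau)=\nu(\gamma)(c\tau+d)^{1/2}M(\tau)$ exactly, whereas $E_2(\gamma\tau)=(c\tau+d)^2\big(E_2(\tau)+12\,\delta\big)$, and hence $E_2^{k-j}(\gamma\tau)=(c\tau+d)^{2(k-j)}\sum_{l}\binom{k-j}{l}(12\delta)^l E_2^{k-j-l}$. Because $D$ acts through the holomorphic variable alone, the chain rule $D[F(\gamma\tau)]=(c\tau+d)^{-2}(DF)(\gamma\tau)$ holds even for our non-holomorphic $M$, and an induction on $j$ then yields the Bol-type identity
\[
(D^jM)(\gamma\tau)=\nu(\gamma)\,(c\tau+d)^{\frac12+2j}\sum_{i=0}^{j}\binom ji\frac{\Gamma(\frac12+j)}{\Gamma(\frac12+i)}\,\delta^{\,j-i}\,(D^iM)(\tau).
\]
Establishing this formula with exactly these $\Gamma$-ratio coefficients is the first technical step.

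Substituting both laws into $\mathcal{E}^k_{\frac12}[M](\gamma\tau)$, every summand acquires the common factor $\nu(\gamma)(c\tau+d)^{2k+1/2}$, and I would collect the remainder by the output monomial $E_2^{a}D^iM$ and the total anomaly power $\delta^{p}$, where necessarily $p=k-a-i$. The crux is that in the coefficient of $E_2^aD^iM\,\delta^p$ the factor $\frac{\Gamma(1/2)}{\Gamma(1/2+j)}$ from $c_{k,j}$ cancels exactly against the $\frac{\Gamma(1/2+j)}{\Gamma(1/2+i)}$ produced by the Bol identity (writing $j=i+s$), the powers of $12$ combine, and after the standard simplifications $\binom{k}{i+s}\binom{i+s}{i}=\binom ki\binom{k-i}{s}$ and $\binom{k-i}{s}\binom{k-i-s}{p-s}=\binom{k-i}{p}\binom ps$ the whole coefficient becomes a fixed multiple of $\sum_{s=0}^{p}(-1)^s\binom ps=(1-1)^p$. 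Thus every anomalous term with $p\ge1$ vanishes, while the $p=0$ term reproduces $\mathcal{E}^k_{\frac12}[M]$, proving modularity of weight $2k+\frac12$. I expect the main obstacle to be precisely this bookkeeping of the triple sum; the entire argument rests on the $\Gamma$-factor cancellation, which is what forces the coefficients $c_{k,j}$ to take their stated form.

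For the shadow identity I would argue directly. Since $E_2$ is holomorphic and $\frac{d}{d\bar\tau}$ commutes with $D$, the hypothesis \eqref{DE} together with $D\,\overline{\eta^3(\tau)}=0$ gives $\sqrt2\,i\,\frac{d}{d\bar\tau}\mathcal{E}^k_{\frac12}[M]=\overline{\eta^3(\tau)}\sum_j c_{k,j}E_2^{k-j}D^j\big((\im\tau)^{-1/2}\big)$. From $D(\im\tau)=-\frac{1}{4\pi}$ one computes $D^j\big((\im\tau)^{-1/2}\big)=(4\pi)^{-j}\frac{\Gamma(1/2+j)}{\Gamma(1/2)}(\im\tau)^{-1/2-j}$; once more the $\Gamma$-ratios cancel those in $c_{k,j}$, and since $\frac{2^{2j}3^j}{(4\pi)^j}=(3/\pi)^j$ the sum collapses to $\frac{\overline{\eta^3(\tau)}}{\sqrt{\im\tau}}\sum_j\binom kj E_2^{k-j}\big(-\tfrac{3}{\pi\,\im\tau}\big)^j=\frac{1}{\sqrt{\im\tau}}\big(E_2-\tfrac{3}{\pi\,\im\tau}\big)^k\overline{\eta^3(\tau)}=\frac{1}{\sqrt{\im\tau}}\,\widehat{E}_2^{\,k}\,\overline{\eta^3(\tau)}$, exactly as claimed. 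This half should be routine once the identity for $D^j\big((\im\tau)^{-1/2}\big)$ is in hand, and the reappearance of the same $\Gamma$-normalization is a useful consistency check that the $c_{k,j}$ are correct.
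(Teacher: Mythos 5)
The paper does not actually prove this lemma; it is imported verbatim as Lemma 4.10 of \cite{MO}, so there is no internal proof to compare against. Judged on its own, your argument is correct and complete in outline, and it is the standard way to establish such statements. The two pillars both check out: (i) the transformation law for iterated derivatives, $(D^jM)(\gamma\tau)=\nu(\gamma)(c\tau+d)^{\frac12+2j}\sum_i\binom{j}{i}\frac{\Gamma(1/2+j)}{\Gamma(1/2+i)}\delta^{j-i}D^iM$, is the classical identity for weight $r=\tfrac12$, and it does persist for the non-holomorphic $M$ precisely because $D=\frac{1}{2\pi i}\partial_\tau$ ignores $\bar\tau$ and the weight-$\tfrac12$ multiplier $\nu(\gamma)$ is $\tau$-independent (calling it a ``Bol-type'' identity is a slight misnomer --- Bol's identity concerns $D^{k-1}$ on weight $2-k$ --- but the formula itself is right); (ii) the triple-sum bookkeeping does collapse as you say: with $j=i+s$ and anomaly exponent $p=k-a-i$, the coefficient of $E_2^aD^iM\,\delta^p$ reduces via $\binom{k}{i+s}\binom{i+s}{i}=\binom{k}{i}\binom{k-i}{s}$ and $\binom{k-i}{s}\binom{k-i-s}{p-s}=\binom{k-i}{p}\binom{p}{s}$ to a multiple of $\sum_s(-1)^s\binom{p}{s}=0^p$, and the $\Gamma$-ratios cancel exactly as designed. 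The shadow computation is likewise correct: $D^j\bigl((\im\tau)^{-1/2}\bigr)=(4\pi)^{-j}\frac{\Gamma(1/2+j)}{\Gamma(1/2)}(\im\tau)^{-1/2-j}$ together with $D\,\overline{\eta^3(\tau)}=0$ and \eqref{DE} gives the binomial expansion of $\widehat{E}_2^k=(E_2-\tfrac{3}{\pi\im\tau})^k$. To turn the sketch into a full proof you only need to write out the induction establishing the derivative transformation law and the one-line Pascal-type recursion it requires; everything else is as routine as you claim.
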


This lemma implies the following corollary:
\begin{corollary}\label{CorDiff}
If  $M(\tau)$ and $\widetilde{M}(\tau)$ are weight $\frac{1}{2}$ harmonic Maass forms on $\Gamma(2)\cap \Gamma_0(4)$ whose
shadow is $\eta(\tau)^3$, then
\begin{equation*}
 \mathcal{E}^k_{\frac{1}{2}} \left[ M(\tau)\right] -  \mathcal{E}^k_{\frac{1}{2}} 
\left[ \widetilde{M}(\tau) \right] =  \mathcal{E}^k_{\frac{1}{2}} \left[ M(\tau) - \widetilde{M}(\tau) \right]  =  
  \mathcal{E}^k_{\frac{1}{2}} \left[ M^+(\tau) - \widetilde{M}^+(\tau) \right] 
\end{equation*}
is a weakly holomorphic modular form of weight  $2k+1/2$.
\end{corollary}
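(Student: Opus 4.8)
The plan is to establish the two equalities directly from linearity and the Remark above, and then to identify the common value as a weakly holomorphic modular form using Lemma~\ref{EkQ}. First I would note that $\mathcal{E}^k_{\frac{1}{2}}$ is a linear operator in its argument: the Eisenstein factors $E_2^{k-j}(\tau)$ and the numerical coefficients $(-1)^j\binom{k}{j}\frac{\Gamma(1/2)}{\Gamma(1/2+j)}2^{2j}3^j$ are independent of the input, while $q\,\frac{d}{dq}$ acts linearly. This gives the first equality at once. For the second equality, I would invoke the Remark: since $M(\tau)$ and $\widetilde{M}(\tau)$ share the shadow $\eta^3(\tau)$, their non-holomorphic parts coincide, $M^-(\tau)=\widetilde{M}^-(\tau)$, both being the Eichler--Zagier integral of the common shadow. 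Hence $M-\widetilde{M}=(M^+-\widetilde{M}^+)+(M^--\widetilde{M}^-)=M^+-\widetilde{M}^+$, and applying $\mathcal{E}^k_{\frac{1}{2}}$ to both sides yields the second equality.

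It then remains to show that this value is weakly holomorphic of weight $2k+1/2$. By the first assertion of Lemma~\ref{EkQ}, each of $\mathcal{E}^k_{\frac{1}{2}}[M]$ and $\mathcal{E}^k_{\frac{1}{2}}[\widetilde{M}]$ is modular of weight $2k+1/2$ for $\Gamma(2)\cap\Gamma_0(4)$, so their difference transforms with the same weight. For holomorphicity I would appeal to the second assertion of the same lemma, namely
\[
 \sqrt{2}\,i\;\frac{d}{d\bar{\tau}}\;\mathcal{E}^k_{\frac{1}{2}}[M(\tau)]=\frac{1}{\sqrt{\im\tau}}\,\widehat{E}_2^k(\tau)\,\overline{\eta^3(\tau)},
\]
with the identical formula holding for $\widetilde{M}$. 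Because the right-hand side does not depend on the chosen Maass form, the $\bar{\tau}$-derivative of the difference vanishes identically on $\mathbb{H}$, so the difference is holomorphic there.

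Finally I would verify the behavior at the cusps. The harmonic Maass forms $M$ and $\widetilde{M}$ have Fourier expansions at each cusp whose holomorphic parts carry only finitely many terms of negative exponent, and the operations building $\mathcal{E}^k_{\frac{1}{2}}$---multiplication by the $E_2^{k-j}(\tau)$, which are holomorphic at the cusps, together with $q\,\frac{d}{dq}$---preserve this finiteness. Thus the difference, being holomorphic on $\mathbb{H}$ and modular of weight $2k+1/2$ with at worst polar growth at the cusps, is by definition a weakly holomorphic modular form. The modularity and the holomorphicity on $\mathbb{H}$ are formal consequences of Lemma~\ref{EkQ} and the Remark; the only step demanding genuine care is this last bookkeeping of the cusp expansions, which is where I would concentrate the detailed argument.
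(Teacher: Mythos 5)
Your proof is correct and follows exactly the route the paper intends: the corollary is stated as an immediate consequence of Lemma~\ref{EkQ} and the preceding Remark, and your argument (linearity of $\mathcal{E}^k_{\frac{1}{2}}$, cancellation of the common non-holomorphic part $M^-=\widetilde{M}^-$, vanishing of the $\bar{\tau}$-derivative of the difference since the right-hand side of the lemma's differential equation is independent of the chosen form, plus the cusp bookkeeping) is precisely the intended deduction.
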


\subsection{The $\mathcal{Q}(q)$ series}
Here we recall one explicit example of a harmonic Maass form which plays the role
of $M(\tau)$ in the previous subsection.
To this end, we
define modular forms $A(\tau)$ and $B(\tau)$ by
\begin{displaymath}
\label{B-functions}
\begin{split}
\mathcal{A}(\tau):=A(8\tau)&=\sum_{n=-1}^{\infty}a(n)q^n:=\frac{\eta(4\tau)^8}{\eta(8\tau)^7}=
q^{-1}-8q^3+27q^7-\cdots,\\
\mathcal{B}(\tau):=B(8\tau)&=\sum_{n=-1}^{\infty}b(n)q^n:=\frac{\eta(8\tau)^5}{\eta(16\tau)^4}=
q^{-1}-5q^7+9q^{15}-\cdots.
\end{split}
\end{displaymath}
We sieve on the Fourier expansion of $\mathcal{A}(\tau)$ to define the modular
forms
\begin{displaymath}
\label{A-functions}
\begin{split}
\mathcal{A}_{3,8}(\tau)&:=A_{3,8}(8\tau)=\sum_{n\equiv 3\pmod 8}a(n)q^n=-8q^3-56q^{11}+\cdots,\\
\mathcal{A}_{7,8}(\tau)&:=A_{7,8}(8\tau)=\sum_{n\equiv 7\pmod 8}a(n)q^n=q^{-1}+27q^7+105q^{15}+\cdots.
\end{split}
\end{displaymath}
We also recall the definition of the following mock theta function
\begin{displaymath}
\label{M-function}
\mathcal{M}(q):=q^{-1}\sum_{n=0}^{\infty}\frac{(-1)^{n+1}q^{8(n+1)^2}\prod_{k=1}^n
(1-q^{16k-8})}{\prod_{k=1}^{n+1}(1+q^{16k-8})^2}=-q^7+2q^{15}-3q^{23}+\cdots,
\end{displaymath}
We define
\begin{displaymath}
\mathcal{Q}^+(q)=\mathcal{Q}^+(\tau):=-\frac{7}{2}\mathcal{A}_{3,8}(\tau)+\frac{3}{2}\mathcal{A}_{7,8}(\tau)-
\frac{1}{2}\mathcal{B}(\tau)+4\mathcal{M}(q)\;,
\end{displaymath}
and so we have that
\begin{equation}\label{Qplus}
 \mathcal{Q}^+\left(\tau/8\right) = \frac{1}{q^{\frac{1}{8}}}\left( 1 + 28 \, q^{\frac{1}{2}} + 39 \, q + 196 \, q^{\frac{3}{2}} + 161 \, q^2 + \dots \right) \;.
\end{equation}
In terms of this $q$-series, the authors proved the following theorem in \cite{MO}.
\begin{theorem}\label{QasMu}{\text {\rm [Theorem 7.2 of \cite{MO}]}}
The function $\mathcal{Q}^+(\tau/8)$ is the holomorphic part of a weight 1/2
harmonic  Maass form on $\Gamma(2)\cap \Gamma_0(4)$ whose
shadow is $\eta(\tau)^3$.
\end{theorem}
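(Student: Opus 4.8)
The plan is to exhibit one explicit weight $1/2$ harmonic Maass form whose holomorphic part is $\mathcal{Q}^+(\tau/8)$. The remark preceding this theorem is decisive here: the non-holomorphic part of \emph{any} weight $1/2$ harmonic Maass form with shadow $\eta^3$ is uniquely the Eichler--Zagier integral of $\eta^3$ (the displayed $M^-$). Hence it suffices to produce a single real-analytic completion of $\mathcal{Q}^+(\tau/8)$ that transforms with weight $1/2$ on $\Gamma(2)\cap\Gamma_0(4)$ and satisfies the defining relation \eqref{DE}.

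First I would split $\mathcal{Q}^+ = \mathcal{Q}^+_{\mathrm{wh}} + 4\,\mathcal{M}(q)$, where $\mathcal{Q}^+_{\mathrm{wh}} := -\tfrac{7}{2}\mathcal{A}_{3,8} + \tfrac{3}{2}\mathcal{A}_{7,8} - \tfrac{1}{2}\mathcal{B}$. The three summands of $\mathcal{Q}^+_{\mathrm{wh}}$ are built from the eta quotients $\mathcal{A},\mathcal{B}$ and from arithmetic sieving of $\mathcal{A}$ onto fixed residue classes. Using Ligozat's conditions for eta quotients together with the fact that sieving a modular form on a congruence subgroup onto a residue class again yields a modular form (on a smaller subgroup), I would verify that $\mathcal{Q}^+_{\mathrm{wh}}$ is a weakly holomorphic modular form of weight $1/2$ whose transformation group, after the scaling $\tau\mapsto\tau/8$, is conjugated into $\Gamma(2)\cap\Gamma_0(4)$. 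Being holomorphic, these pieces are killed by the antiholomorphic derivative $\sqrt{2}\,i\,\tfrac{d}{d\bar\tau}$, so they contribute nothing to the shadow and only adjust the holomorphic part.

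The essential step is the mock piece $\mathcal{M}(q)$. The exponents $q^{8(n+1)^2}$ and $q^{16k-8}$ show that after $q\mapsto q^{8}$ this is an Appell--Lerch type series, so I would identify it with a specialization of Zwegers' $\mu$-function (equivalently, a classical mock theta function) whose modular completion $\widehat{\mathcal{M}}$ is known. Zwegers' theory then supplies both the transformation law of $\widehat{\mathcal{M}}$ on the relevant subgroup and its shadow, a weight $3/2$ unary theta series. I would then assemble $\widehat{\mathcal{Q}^+} := \mathcal{Q}^+_{\mathrm{wh}} + 4\,\widehat{\mathcal{M}}$, a real-analytic weight $1/2$ object, and compute its image under $\sqrt{2}\,i\,\tfrac{d}{d\bar\tau}$, which equals $4$ times the image of $\widehat{\mathcal{M}}$. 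The target is to show that, after $\tau\mapsto\tau/8$ and accounting for the normalizing constants, this equals $\tfrac{1}{\sqrt{\im\tau}}\,\overline{\eta^3(\tau)}$, matching \eqref{DE} exactly.

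The hardest part is this final matching. One must check that the shadow produced by $4\,\mathcal{M}$ is precisely $\eta^3$, not merely some weight $3/2$ form of the same level, and that the completed form transforms on $\Gamma(2)\cap\Gamma_0(4)$ itself rather than on a proper subgroup or with a residual multiplier. The specific coefficients $-\tfrac{7}{2},\tfrac{3}{2},-\tfrac{1}{2},4$ and the $8\tau$-scaled eta quotients are tuned precisely to clear these obstructions: $\mathcal{Q}^+_{\mathrm{wh}}$ cancels the unwanted principal parts at the cusps and corrects the multiplier system, leaving the clean leading term $q^{-1/8}(1+\cdots)$ of \eqref{Qplus} that matches the $l=0$ term of the canonical $M^-$. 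Once the shadow identity and the transformation law are established, harmonicity is automatic, since the completion of a mock theta function plus a weakly holomorphic modular form is annihilated by the weight $1/2$ Laplacian. We conclude that $\widehat{\mathcal{Q}^+}(\tau/8)$ is a weight $1/2$ harmonic Maass form on $\Gamma(2)\cap\Gamma_0(4)$ with shadow $\eta^3$, and its holomorphic part is $\mathcal{Q}^+(\tau/8)$, as claimed.
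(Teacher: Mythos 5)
You should note at the outset that this paper contains no proof of Theorem \ref{QasMu}: it is imported verbatim as Theorem 7.2 of \cite{MO}, so the only thing to compare your argument against is the proof given there. Your outline does track that proof's broad architecture --- separate the eta-quotient part from the mock theta function $\mathcal{M}(q)$, observe that the holomorphic summands are annihilated by $\frac{d}{d\bar\tau}$ and so cannot affect the shadow, complete the mock piece using Zwegers' theory \cite{Zwegers}, and match the result against \eqref{DE}. The reduction to the single function $\mathcal{M}(q)$ is correct and is the right first move.

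The difficulty is that the one step you defer is the entire content of the theorem. Everything you actually carry out is routine: Ligozat-type criteria for the eta-quotients, sieving onto residue classes, holomorphy killing the $\bar\tau$-derivative, harmonicity following from holomorphy of the shadow. The decisive claims --- that $\mathcal{M}(q)$ is an identifiable specialization of $\mu$ (note that passing from an Eulerian $q$-series of this shape to an Appell--Lerch sum is itself a nontrivial identity, not something read off from the exponents), that its completion transforms on a conjugate of $\Gamma(2)\cap\Gamma_0(4)$ and not merely some subgroup, and that the resulting shadow is $\eta(\tau)^3$ with constant exactly $1$ rather than some other weight $3/2$ unary theta series of the same level --- are all stated as targets (``I would identify\dots'', ``the target is to show\dots'') rather than established. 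In addition, one assertion is wrong as written: the combination $\mathcal{Q}^+_{\mathrm{wh}}$ cannot ``correct the multiplier system.'' If the completed mock piece and the weakly holomorphic piece transformed with different multipliers, their sum would transform with neither; what must be checked is that every summand separately has weight $1/2$ with the \emph{same} multiplier on the same group. The coefficients $-\frac{7}{2},\frac{3}{2},-\frac{1}{2},4$ can only adjust principal parts at cusps and the normalization of the holomorphic part, not repair a multiplier mismatch. So the proposal is a correct plan with the central computation missing, plus one conceptual slip that would need to be removed before the argument could be completed.
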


\section{$u$-plane integrals,
Donaldson invariants and the proof of Theorem~\ref{mainthm}}\label{donaldsonsection}

Suppose again that $M(\tau)$ is a weight 1/2 harmonic Maass form on $\Gamma(2)\cap \Gamma_0(4)$
whose shadow is $\eta(\tau)^3$.
For $m,n \in \mathbb{N}_0$, the authors proved that the quantities 
\begin{equation}
\label{uplane0}
   \pmb{\mathrm{D}}_{m,2n}[M^+(\tau)] := \sum_{k=0}^n \frac{(-1)^{k+1}}{2^{n-1} \; 3^{n}} \;
 \frac{(2n)!}{(n-k)! \; k!} \;  \left[ \, \frac{\vartheta^9_4(\tau) \,
\left[\vartheta^4_2(\tau) + \vartheta^4_3(\tau)\right]^{m+n-k}}{\left[\vartheta_2(\tau) \, \vartheta_3(\tau)\right]^{2m+2n+3}} \;
  \mathcal{E}^k_{\frac{1}{2}} \left[ M^+(\tau)\right]  \right] _{q^0} \;,
\end{equation}
where $[.]_{q^0}$ denotes the constant coefficient term, 
are the Moore-Witten {\it $u$-plane integrals  for
} $M(\tau)$ (cf. \cite{MO}).
Notice that if $\widetilde{M}(\tau)$ is another such form, then
\begin{equation}
  \pmb{\mathrm{D}}_{m,2n}[M^+(\tau)]  -  \pmb{\mathrm{D}}_{m,2n}[\widetilde{M}^+(\tau)]  =  \pmb{\mathrm{D}}_{m,2n}[M^+(\tau) - \widetilde{M}^+(\tau)] \;. 
\end{equation}

In their seminal paper \cite{MW}, Moore and Witten essentially conjectured that the $u$-plane integrals
in (\ref{uplane0}) for a suitable $M^+(\tau)$ should give the
$\mathrm{SO}(3)$-Donaldson invariants of $\mathbb{C}\mathrm{P}^2$. These invariants  are an 
infinite sequence of rational numbers $\pmb{\mathrm{\Phi}}_{m,2n}$ labeled by integers $m,n \in \mathbb{N}$
that can be assembled in a generating function in the two formal variables $p,S$:
\begin{displaymath}
\label{generatingfunction}
 \pmb{\mathrm{Z}}(p,S) =  \sum_{m ,n \ge 0} \pmb{\mathrm{\Phi}}_{m, 2n} \;
 \; \frac{p^{m}}{m!}\frac{S^{2n}}{(2n)!}.
\end{displaymath}
This power series is a diffeomorphism invariant for $\C \mathrm{P}^2$.
The main theorem in \cite{MO} proved this conjecture for $\mathcal{Q}^+(\tau/8)$.
\begin{theorem} \label{koeffizienten}{\text {\rm [Theorem 1.1 of \cite{MO}]}}
For $m,n \in \mathbb{N}_0$ we have that
\begin{displaymath}
\label{uplane0b}
 \pmb{\mathrm{\Phi}} _{m,2n}
=   \pmb{\mathrm{D}}_{m,2n}[\mathcal{Q}^+(\tau/8)] \;.
\end{displaymath}
\end{theorem}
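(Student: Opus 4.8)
The plan is to prove the Moore--Witten Conjecture for $\C\mathrm{P}^2$ by identifying two explicit two-variable generating functions. On the geometric side stands the Donaldson generating function $\pmb{\mathrm{Z}}(p,S)=\sum_{m,n\ge 0}\pmb{\mathrm{\Phi}}_{m,2n}\,\frac{p^m}{m!}\frac{S^{2n}}{(2n)!}$. Since $\C\mathrm{P}^2$ has $b_2^+=1$, these $\mathrm{SO}(3)$-invariants are chamber invariants, and I would take as input their known closed form from the Donaldson-theory literature (see \cite{DonaldsonKronheimer, Goettsche, GoettscheZagier}), computed either algebro-geometrically or by wall-crossing from a chamber in which the invariants are elementary. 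On the gauge-theory side I would assemble the generating function of the right-hand quantities, $\sum_{m,n\ge 0}\pmb{\mathrm{D}}_{m,2n}[\mathcal{Q}^+(\tau/8)]\,\frac{p^m}{m!}\frac{S^{2n}}{(2n)!}$, directly from the constant-term formula \ref{uplane0}. The theorem asserts that these two generating functions agree.

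First I would put the right-hand side in closed form. Substituting \ref{uplane0} and interchanging summations, the variable $m$ enters only through $(\vartheta_2^4+\vartheta_3^4)^m/(\vartheta_2\vartheta_3)^{2m}$, so the sum over $m$ with weight $p^m/m!$ exponentiates to $\exp\!\big(p\,(\vartheta_2^4(\tau)+\vartheta_3^4(\tau))/(\vartheta_2(\tau)\vartheta_3(\tau))^2\big)$; after setting $j=n-k$, the remaining double sum over $n$ and $k$ factors into a further exponential in $S^2$ times the series $\sum_{k\ge 0}\frac{(-1)^k}{k!}\big(S^2/(6(\vartheta_2\vartheta_3)^2)\big)^k\,\mathcal{E}^k_{\frac{1}{2}}[\mathcal{Q}^+(\tau/8)]$. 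The outcome exhibits the right-hand generating function as the constant $q$-coefficient of a single explicit modular object built from $\vartheta_2,\vartheta_3,\vartheta_4$ and $\mathcal{Q}^+(\tau/8)$. Lemma \ref{EkQ} is essential here: it guarantees that each $\mathcal{E}^k_{\frac{1}{2}}[\mathcal{Q}^+(\tau/8)]$ is modular of weight $2k+1/2$ for $\Gamma(2)\cap\Gamma_0(4)$, with a controlled $\bar{\tau}$-anomaly proportional to $\widehat{E}_2^k\,\overline{\eta^3}$, so that this assembled object transforms correctly and extraction of its constant term is legitimate.

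The heart of the proof is to recognize this assembled object as the integrand of the regularized Moore--Witten $u$-plane integral for $\C\mathrm{P}^2$ and to evaluate that integral. Following \cite{MW}, the $u$-plane integral is a regularized integral over a fundamental domain of a non-holomorphic modular form: the photon sum produces the theta data governing the lattice $H^2(\C\mathrm{P}^2;\Z)$ (the combination $(\vartheta_2^4+\vartheta_3^4)/(\vartheta_2\vartheta_3)^2$ together with the measure factor $\vartheta_4^9/(\vartheta_2\vartheta_3)^3$), while the low-energy coupling forces the weight-$1/2$ input to be the holomorphic part of a harmonic Maass form with shadow $\eta^3$. I would evaluate the regularized integral by the standard holomorphic-anomaly/unfolding argument: away from the cusps the integrand is a total $\bar{\tau}$-derivative, so Stokes' theorem collapses the integral onto boundary contributions at the cusps, and these are exactly the signed binomial sum of constant terms in \ref{uplane0}. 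Matching the cusp data then pins down $\mathcal{Q}^+(\tau/8)$ as the unique admissible holomorphic part --- among all mock modular forms with shadow $\eta^3$, which differ by weakly holomorphic forms as noted in Section~\ref{mock} --- that reproduces the Donaldson chamber invariants.

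The main obstacle is precisely this last evaluation, where mock modularity is indispensable. Because $\C\mathrm{P}^2$ has $b_2^+=1$, the naive holomorphic integrand is not modular, and a well-defined invariant emerges only after the non-holomorphic completion is restored; by \ref{DE} the completion $M$ of $\mathcal{Q}^+(\tau/8)$ (Theorem \ref{QasMu}) satisfies $\sqrt{2}\,i\,\partial_{\bar\tau}M=\overline{\eta^3}/\sqrt{\im\tau}$, and this anomaly is exactly what generates the metric-dependent wall-crossing terms of the Donaldson invariants. Controlling the regularization of the divergent cusp integrals, tracking the $\widehat{E}_2$-versus-$E_2$ (equivalently $1/\im\tau$) corrections demanded by Lemma \ref{EkQ}, and verifying that the boundary terms assemble into the constant-term sum of \ref{uplane0} rather than into a spurious weakly holomorphic contribution, is the delicate step. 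Once this bookkeeping is carried out for all $m,n$, comparison with the known closed form of $\pmb{\mathrm{Z}}(p,S)$ for $\C\mathrm{P}^2$ finishes the proof.
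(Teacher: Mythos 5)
First, a point of comparison: this paper does not prove Theorem~\ref{koeffizienten} at all --- it is imported verbatim as Theorem~1.1 of \cite{MO}, so your attempt must be measured against the proof in that earlier paper. At the level of architecture your sketch does track that proof: the resummation of \ref{uplane0} into a two-variable generating function (your exponentiation of the $m$-sum and the factorization after $j=n-k$ is consistent with the closed form for $\pmb{\mathrm{Z}}(p,S)$ displayed at the end of Section~\ref{sec4}), the Moore--Witten lattice/photon data for $H^2(\C\mathrm{P}^2;\Z)$, the non-holomorphic completion with shadow $\eta(\tau)^3$ forced by $b_2^+=1$, and the collapse of the regularized integral over the fundamental domain onto constant terms at the cusps via the holomorphic anomaly. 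But as a proof it has a genuine gap at exactly the decisive step. ``Recognizing the assembled object as the $u$-plane integrand and evaluating by Stokes'' only reproduces the \emph{definition} of the right-hand side $\pmb{\mathrm{D}}_{m,2n}[\mathcal{Q}^+(\tau/8)]$; the content of the theorem is the equality of these numbers with the geometric invariants $\pmb{\mathrm{\Phi}}_{m,2n}$ for \emph{all} $m,n$, and your plan defers this to ``comparison with the known closed form of $\pmb{\mathrm{Z}}(p,S)$'' without any mechanism for matching infinitely many coefficients. In \cite{MO} this is the hard technical core: the cusp constant terms must be shown to satisfy the same structural identities (contact terms, blow-up/wall-crossing relations in the sense of \cite{Goettsche, GoettscheZagier}) as the algebro-geometrically computed invariants; finitely many numerical checks, which is what ``bookkeeping'' amounts to, cannot close this.

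There is also a concrete misstep: your claim that ``matching the cusp data pins down $\mathcal{Q}^+(\tau/8)$ as the \emph{unique} admissible holomorphic part'' is false, and is contradicted by this very paper. Any two holomorphic parts of harmonic Maass forms with shadow $\eta(\tau)^3$ differ by a weakly holomorphic form (Corollary~\ref{CorDiff}), and Theorem~\ref{kernel} shows that $\pmb{\mathrm{D}}_{m,2n}$ annihilates precisely such differences --- the constant-term functional is \emph{insensitive} to the choice of holomorphic part. So no cusp-matching can single out $\mathcal{Q}^+$, and none is needed: the specific series $\mathcal{Q}^+$ of Theorem~\ref{QasMu} enters \cite{MO} as an explicitly constructed witness, not as the unique solution of a matching problem. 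Indeed the whole point of the present paper's Theorem~\ref{mainthm} is that one may replace $\mathcal{Q}^+(\tau/8)$ by $H(\tau)/12$ with no change in the invariants; a proof strategy resting on uniqueness of the mock modular input would therefore fail at the outset.
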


\medskip
Using the work in \cite{MO}, we prove the following important theorem.
\begin{theorem} \label{kernel}
Let $M(\tau)$ be  as above, then for all $m,n \in \mathbb{N}_0$ we have:
\begin{displaymath}
\begin{split}
\pmb{\mathrm{D}}_{m,2n}[\mathcal{Q}^+(\tau/8)] - \pmb{\mathrm{D}}_{m,2n}[{M}^+(\tau)]  = \pmb{\mathrm{D}}_{m,2n}[\mathcal{Q}^+(\tau/8) - {M}^+(\tau)] 
  =0 \;.
\end{split}
\end{displaymath}
\end{theorem}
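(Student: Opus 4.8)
The plan is to deduce the statement from the linearity of $\pmb{\mathrm{D}}_{m,2n}$ together with a vanishing result for a single weakly holomorphic form. The first equality is immediate: the operators $\mathcal{E}^k_{\frac{1}{2}}$ and the functional $[\,\cdot\,]_{q^0}$ are linear, which is exactly the linearity already recorded after \eqref{uplane0}. So the content is the second equality, i.e.\ that $\pmb{\mathrm{D}}_{m,2n}[f]=0$ for $f:=\mathcal{Q}^+(\tau/8)-M^+(\tau)$. First I would invoke Theorem~\ref{QasMu} and the hypothesis on $M(\tau)$: both $\mathcal{Q}^+(\tau/8)$ and $M^+(\tau)$ are holomorphic parts of weight $\frac{1}{2}$ harmonic Maass forms on $\Gamma(2)\cap\Gamma_0(4)$ with the \emph{same} shadow $\eta^3$. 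By the Remark following \eqref{DE} the non-holomorphic parts of two such forms coincide, so $f$ is a weakly holomorphic modular form of weight $\frac{1}{2}$ on $\Gamma(2)\cap\Gamma_0(4)$, with singularities supported only at the cusps.

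Next I would realize $\pmb{\mathrm{D}}_{m,2n}[f]$ as the constant term of one weight-$2$ weakly holomorphic modular form. By Corollary~\ref{CorDiff} each $\mathcal{E}^k_{\frac{1}{2}}[f]$ is weakly holomorphic of weight $2k+\frac{1}{2}$, and a weight count gives $\tfrac{9}{2}+2(m+n-k)-(2m+2n+3)+(2k+\tfrac{1}{2})=2$, so that for every $k$ the bracketed quantity in \eqref{uplane0} is weakly holomorphic of weight $2$ (the theta quotient carrying the correct multiplier on $\Gamma(2)\cap\Gamma_0(4)$). Summing over $k$,
\[\pmb{\mathrm{D}}_{m,2n}[f]=\big[\,G_{m,n}\,\big]_{q^0},\qquad G_{m,n}:=\sum_{k=0}^n\frac{(-1)^{k+1}}{2^{n-1}3^n}\,\frac{(2n)!}{(n-k)!\,k!}\,\frac{\vartheta_4^9\,(\vartheta_2^4+\vartheta_3^4)^{m+n-k}}{(\vartheta_2\vartheta_3)^{2m+2n+3}}\,\mathcal{E}^k_{\frac{1}{2}}[f],\]
a single weakly holomorphic modular form of weight $2$.

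To handle all $(m,n)$ at once I would pass to the generating series $Z_f(p,S):=\sum_{m,n\ge0}\pmb{\mathrm{D}}_{m,2n}[f]\,\frac{p^m}{m!}\frac{S^{2n}}{(2n)!}$ and collapse the sums. Writing $X:=(\vartheta_2^4+\vartheta_3^4)/(\vartheta_2\vartheta_3)^2$ and $R:=(\vartheta_2\vartheta_3)^2$, the sum over $m$ and the sum over $n-k$ each exponentiate, leaving $Z_f=-2\big[\tfrac{\vartheta_4^9}{(\vartheta_2\vartheta_3)^3}\,e^{(p+S^2/6)X}\,W\big]_{q^0}$ with $W=\sum_k\frac{1}{k!}\big(-\tfrac{S^2}{6R}\big)^k\mathcal{E}^k_{\frac{1}{2}}[f]$. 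Inserting the definition of $\mathcal{E}^k_{\frac{1}{2}}$, using $2^{2j}3^j=12^j$ and $\Gamma(\tfrac{1}{2})/\big(j!\,\Gamma(\tfrac{1}{2}+j)\big)=4^j/(2j)!$, the quasimodular pieces resum to
\[W=e^{-S^2E_2/(6R)}\sum_{j\ge0}\frac{1}{(2j)!}\left(\frac{8S^2}{R}\right)^{\!j}\left(q\frac{d}{dq}\right)^{\!j} f.\]
Thus the whole theorem reduces to the single assertion $Z_f\equiv0$, i.e.\ that this explicit weight-$2$ object has vanishing $q^0$-coefficient in every bidegree.

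This last step is the technical heart and the place I expect the real difficulty. I see two routes. The first is to show that, bidegree by bidegree, $G_{m,n}$ is a total derivative $q\frac{d}{dq}\,h_{m,n}$ of a weight-$0$ weakly holomorphic modular function; then $[q\frac{d}{dq}h_{m,n}]_{q^0}=0$ automatically, a mechanism robust to the poles of $f$ at the cusp $\infty$. The presence of the quasimodular factor $e^{-S^2E_2/(6R)}$ in $W$ is precisely what makes $W$ (hence $G_{m,n}$) genuinely modular and should be what realizes this exactness. The second route is to read $[G_{m,n}]_{q^0}$ as the residue at $\infty$ of the meromorphic differential $G_{m,n}\,dq/q$ on $X(\Gamma(2)\cap\Gamma_0(4))$ and to apply the residue theorem, which forces the constant term at $\infty$ to cancel against the residues at the remaining cusps; controlling those residues requires the divisor of $\vartheta_4^9/(\vartheta_2\vartheta_3)^3$ together with the fact that $f$, being a difference of two moderate-growth harmonic Maass forms sharing the shadow $\eta^3$, has its singularity confined. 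Establishing the exactness in the first route (equivalently, the residue cancellation in the second) \emph{uniformly} in $m$ and $n$ is the main obstacle.
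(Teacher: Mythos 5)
Your setup is right and you have correctly isolated where the work lies, but the proposal stops exactly at the point where the proof actually happens: you name the vanishing of the constant terms as ``the main obstacle'' and offer two possible routes without carrying either out, so as written this is a genuine gap rather than a proof. The paper's argument is your route (a), made to work by two concrete devices you are missing. First, it does not sum over $k$ (and \emph{a fortiori} does not pass to the generating function, where the quasimodular factor $e^{-S^2E_2/(6R)}$ only obscures matters): it shows that the constant term of \emph{each summand} in (\ref{uplane0}) vanishes separately. Second, after rescaling $\tau\to 8\tau$ it introduces the explicit modular function $\widehat{Z_0}(\tau)=E^*(4\tau)/(\Theta_2(\tau)\Theta_3(\tau))^2$ on $\Gamma_0(8)$ and verifies the key identity $q\frac{d}{dq}\widehat{Z_0}=\Theta_4^9/(\Theta_2\Theta_3\,\eta(8\tau)^3)$, which lets the $k$-th summand be factored as $q\frac{d}{dq}\widehat{Z_0}\cdot\widehat{Z_0}^{\,m+n-k}\cdot\mathcal{H}_k(q)$ with $\mathcal{H}_k=\eta(8\tau)^3(\Theta_2\Theta_3)^{-2k-2}\,\mathcal{E}^k_{\frac{1}{2}}[\,\mathcal{Q}^+(\tau)-M(8\tau)\,]$.

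The exactness you hoped for then comes from a structural fact you would need to supply: the space of modular functions on $\Gamma_0(8)$ that are holomorphic away from $\infty$ and lie in $\mathbb{C}((q^2))$ is precisely the polynomial ring $\mathbb{C}[\widehat{Z_0}]$. By Corollary~\ref{CorDiff}, $\mathcal{H}_k$ is modular of weight $0$; a check of $q$-expansions ($(\Theta_2\Theta_3)^{-2}=q^{-2}f(q^4)$, $\eta(8\tau)^3=q\,g(q^8)$, $\mathcal{E}^k_{\frac{1}{2}}[\cdots]=q^{-1}h(q^4)$) shows it lies in $\mathbb{C}((q^2))$ and is holomorphic away from $\infty$, hence is a polynomial in $\widehat{Z_0}$. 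Each summand is therefore $q\frac{d}{dq}$ of a polynomial in $\widehat{Z_0}$ and has vanishing constant term, uniformly in $m$, $n$ and $k$. Without identifying $\widehat{Z_0}$ and proving this polynomial characterization (or carrying out the equivalent residue computation at the remaining cusps), your argument does not close; and if you insisted on your generating-function version you would still have to prove the same exactness statement, so nothing is gained by resumming first.
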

\begin{proof}
We  prove that the constant terms vanish in expressions of the form
\begin{displaymath}
\label{difference}
\begin{split}
 \sum_{k=0}^n \frac{(-1)^{k+1}}{2^{n-1} \; 3^{n}} \;
 \frac{(2n)!}{(n-k)! \; k!} \;  \frac{\vartheta^9_4(\tau) \,
\left[\vartheta^4_2(\tau) + \vartheta^4_3(\tau)\right]^{m+n-k}}{\left[\vartheta_2(\tau) \, \vartheta_3(\tau)\right]^{2m+2n+3}} \;
  \mathcal{E}^k_{\frac{1}{2}} \left[ \mathcal{Q}^+(\tau/8) - {M}(\tau) \right]  \;. 
  \end{split}
\end{displaymath}
It is sufficient to show that this is the case for each summand. Therefore, after rescaling
$\tau \to 8\tau$ and $q \to q^8$ it is enough to show that the constant vanishes in
\begin{equation}
\label{difference2}
\begin{split}
&  \frac{\Theta^9_4(\tau) \,
 \left[16 \, \Theta^4_2(\tau) + \Theta^4_3(\tau)\right]^{m+n-k}}{\left[\Theta_2(\tau) \, \Theta_3(\tau)\right]^{2m+2n+3}} \;
  \mathcal{E}^k_{\frac{1}{2}} \left[ \mathcal{Q}^+(\tau) - {M}(8\tau) \right] \\
= & \frac{\Theta^9_4(\tau)}{\Theta_2(\tau)\Theta_3(\tau)\eta(8\tau)^3} \;
 \frac{ \left[16 \, \Theta^4_2(\tau) + \Theta^4_3(\tau)\right]^{m+n-k}}{\left[\Theta_2(\tau) \, \Theta_3(\tau)\right]^{2m+2n-2k}} \;
   \frac{\eta(8\tau)^3}{(\Theta_2(\tau)\Theta_3(\tau))^{2k+2}}\;  \mathcal{E}^k_{\frac{1}{2}} \left[ \mathcal{Q}^+(\tau) - {M(8\tau)}
 \right]. 
  \end{split}
\end{equation}
Here the classical theta functions are defined by
\begin{displaymath}\label{theta2}
\Theta_2(\tau):=\frac{\eta(16\tau)^2}{\eta(8\tau)}=\sum_{n=0}^{\infty}q^{(2n+1)^2}=
q+q^9+q^{25}+\cdots,
\end{displaymath}
\begin{displaymath}\label{theta3}
\Theta_3(\tau):=\frac{\eta(8\tau)^5}{\eta(4\tau)^2\eta(16\tau)^2}=1+2\sum_{n=1}^{\infty}
q^{4n^2}=1+2q^4+2q^{16}+2q^{36}+\cdots,
\end{displaymath}
\begin{displaymath}\label{theta4}
\Theta_4(\tau):=\frac{\eta(4\tau)^2}{\eta(8\tau)}=1+2\sum_{n=1}^{\infty}(-1)^nq^{4n^2}=
1-2q^4+2q^{16}-2q^{36}+\cdots.
\end{displaymath}
These are related to the theta functions $\vartheta_2(\tau), \vartheta_3(\tau)$ and $\vartheta_4(\tau)$ by
$$\vartheta_2(\tau)=2\Theta_2\left(\frac{\tau}{8}\right),\ \ \
\vartheta_3(\tau)=\Theta_3\left(\frac{\tau}{8}\right),\ \ \
\vartheta_4(\tau)=\Theta_4\left(\frac{\tau}{8}\right).
$$

We define  a weakly holomorphic modular function  by
\begin{eqnarray}\label{hatZ0}
\widehat{Z_0}(q)=\widehat{Z_0}(\tau) := \frac{E^*(4\tau)}{\Theta_2(\tau)^2\Theta_3(\tau)^2}
\end{eqnarray}
where $E^*(4\tau)$ is the weight 2 Eisenstein series with
 \begin{displaymath}
 E^*(4\tau) =16\Theta_2(\tau)^4+\Theta_3(\tau)^4=1+24q^4+24q^2+\cdots \;.
\end{displaymath} 
A calculation shows that
\begin{displaymath}
q\frac{d}{dq}\widehat{Z_0}(q) = \frac{\Theta_4(\tau)^9}{\Theta_2(\tau)\Theta_3(\tau)\eta(8\tau)^3}.
\end{displaymath}
Equation (\ref{difference2}) becomes
\begin{equation}
\label{difference3}
 q\frac{d}{dq}\widehat{Z_0}(q) \cdot  \widehat{Z_0}(q)^{m+n-k}\cdot \mathcal{H}_{k}(q)\;.
 \end{equation}
where
\begin{equation}
\mathcal H _{k}(q):= \frac{\eta(8\tau)^3}{\left(\Theta_2(\tau)\Theta_3(\tau)\right)^{2k+2}} \;  \mathcal{E}^k_{\frac{1}{2}} \left[ \mathcal{Q}^+(\tau) - {M}(8\tau) \right] \;.
\end{equation}
 To prove the theorem, it suffices to show that the constant term in (\ref{difference3}) vanishes. Hence, it is enough to show that $\mathcal H _{k}(q)$ is a polynomial in $\widehat{Z_0}(q)$. To this end, we define $M_0^*(\Gamma_0(8))$ to be the space of modular function on $\Gamma_0(8)$ which are holomorphic 
 away from infinity, and is a subspace of $\C((q^2))$. One can easily  verify that $M_0^*(\Gamma_0(8))$ is precisely the set of polynomials in $\widehat{Z_0}(q)$.
From Corollary~\ref{CorDiff} we can observe that $\mathcal H _{k}(q)$ is modular with weight $0$.  A calculation shows that  $\left(\Theta_2(\tau)\Theta_3(\tau)\right)^{-2}=
q^{-2} \, f(q^4)$ is holomorphic away from infinity, and $f(q)\in \Z[[q]]$. We also have $\eta(8\tau)^3 = q \, g(q^8)$ and $\mathcal{E}^k_{\frac{1}{2}} \left[ \mathcal{Q}^+(\tau) - {M}(8\tau) \right]  = q^{-1} \, h(q^4)$, where $g(q), h(q)\in \Z[[q]]$. Hence, $\mathcal{H}_k(q) \in \mathbb{C}((q^2))$ is modular of
 weight 0 on $M_0^*(\Gamma_0(8))$, and so is a polynomial in $\widehat{Z}_0(\tau)$.
\end{proof}

\subsection{Proof of Theorem~\ref{mainthm}}
Since $H(\tau)$ is the mock modular part
of a  weight 1/2 harmonic Maass form on $\Gamma(2)\cap\Gamma_0(4)$ whose
shadow is the $8 \cdot 3 \cdot \eta(\tau)^3/2 = 12 \, \eta(\tau)^3$, it follows from Theorem \ref{koeffizienten} and \ref{kernel}
that for $m,n \in \mathbb{N}_0$ we have:
\begin{displaymath}
\begin{split}
\pmb{\Phi}_{m,2n} & = \pmb{\mathrm{D}}_{m,2n}[\mathcal{Q}^+(\tau/8)]  
= \pmb{\mathrm{D}}_{m,2n}[H(\tau)/12] +  \pmb{\mathrm{D}}_{m,2n}\Big[\mathcal{Q}^+(\tau/8) - H(\tau)/12\Big] \\
& = \pmb{\mathrm{D}}_{m,2n}[H(\tau)/12] 
 = \frac{1}{12} \, \pmb{\mathrm{D}}_{m,2n}[H(\tau)].
\end{split}
\end{displaymath}


\subsection{Discussion of the identities implied by Theorem \ref{mainthm}}
\label{sec4}

In the table below we list the first non-vanishing $\mathrm{SO}(3)$-Donaldson invariants $\pmb{\Phi}_{m,2n}$
of $\C\mathrm{P}^2$ as well as the coefficients $\pmb{\mathrm{D}}_{m,2n}[M^+(\tau)]$ when the mock modular form
is given as $M^+(\tau)=q^{-1/8} \, \sum_{k\ge0} H_k \, q^{k/2}$. In general, $\pmb{\mathrm{D}}_{m,2n}[M^+(\tau)]$ is 
nonvanishing for $m+n\equiv 0\pmod{2}$ and a rational linear combination of the first $(m+n)/2+1$ coefficients of $M^+(\tau)$.

\medskip
\begin{center}
\begin{tabular}{|c||l|l|}
\hline
\raisebox{-0.5ex}{$(m,n)$} & \raisebox{-0.5ex}{$\pmb{\Phi}_{m,2n}$} & \raisebox{-0.5ex}{$\pmb{\mathrm{D}}_{m,2n}[M^+(\tau)]$}\\ [1ex]
\hline
\raisebox{-0.5ex}{$(0,0)$}   &  \raisebox{-0.5ex}{$-1$}                  &  \raisebox{-0.5ex}{$-\frac{1}{4} H_1 + 6 H_0$} \\ [1ex]
\hline
 \raisebox{-0.5ex}{$(0,2)$} &   \raisebox{-0.5ex}{$- \frac{3}{16}   $}   &  \raisebox{-0.5ex}{$- \frac{49}{64} H_2 +  \frac{9}{4}  H_1 -  \frac{2133}{64}  H_0$} \\ [1ex]
 $(1,1)$ &  $- \frac{5}{16}   $    & $ - \frac{7}{64} H_2 +  \frac{1}{4}  H_1 -   \frac{195}{64}  H_0$ \\ [0.5ex]
 $(2,0)$ &  $- \frac{19}{16}  $    & $  -\frac{1}{64} H_2 -  \frac{1}{4}  H_1 +   \frac{411}{64}  H_0$ \\ [0.5ex]
 \hline
 \raisebox{-0.5ex}{$(0,4)$}&  \raisebox{-0.5ex}{$ - \frac{232}{256}   $ } &
\raisebox{-0.5ex}{$- \frac{14641}{1024} H_3 + \frac{2401}{128}  H_2 +  \frac{44631}{1024}  H_1 + \frac{108741}{128} H_0$}\\ [1ex]
$(1,3)$&$ - \frac{152}{256}   $  & $- \frac{1331}{1024}  H_3 -    \frac{49}{128} H_2 +  \frac{10341}{1024}  H_1 - \frac{1749}{128}   H_0$\\ [0.5ex]
$(2,2)$&$ - \frac{136}{256}   $  & $- \frac{121}{1024}   H_3 -    \frac{91}{128} H_2 +  \frac{2895}{1024}   H_1 - \frac{3687}{128}   H_0$\\ [0.5ex]
$(3,1)$&$ - \frac{184}{256}   $  & $- \frac{11}{1024}    H_3 - \frac{29}{128}    H_2 +  \frac{589}{1024}    H_1 - \frac{753}{128}    H_0$\\ [0.5ex]
$(4,0)$&$ - \frac{680}{256}   $  & $- \frac{1}{1024}     H_3 - \frac{7}{128}     H_2 -  \frac{505}{1024}    H_1 + \frac{1725}{128}   H_0$\\ [0.5ex]
\hline
\end{tabular}
\end{center}

\medskip

Theorem \ref{koeffizienten} states that choosing $M^+(\tau)=\mathcal{Q}^+(\tau/8)$ from (\ref{Qplus}) 
we find equality of the Donaldson invariants $\pmb{\Phi}_{m,2n}$ and the $u$-plane integral $\pmb{\mathrm{D}}_{m,2n}[M^+(\tau)]$. In fact, setting 
$H_0=1, H_1=28, H_2=39, H_3=196$ in the third column of the table above gives the Donaldson invariants of the second column.

On the other hand, the choice $M^+(\tau)=H(\tau)/12$ from  (\ref{H}) implies that $H_0 = -1/6$, $H_{2k}=A_k/6$, $H_{2k+1}=0$ for $k \in \mathbb{N}$.
Theorem \ref{mainthm} states that choosing $M^+(\tau)=H(\tau)/12$ we still find equality of
the Donaldson invariants $\pmb{\Phi}_{m,2n}$ and the $u$-plane integral $\pmb{\mathrm{D}}_{m,2n}[M^+(\tau)]$.
 In fact, setting  $H_0=-1/6, H_1=0, H_2=45/6, H_3=0$ in the third column of the table gives the Donaldson invariants of the second column as well.

The proof of Theorem \ref{mainthm} implies the following form for the generating function $\pmb{\mathrm{Z}}(p,S)$ of the 
$\mathrm{SO}(3)$-Donaldson invariants of $\mathbb{C}\mathrm{P}^2$ in terms of the mock modular form $H(\tau)$:
\begin{equation}
\begin{split}
  \pmb{\mathrm{Z}}(p,s) =  -  \sum_{m,n \ge 0} & \; \frac{p^m \, S^{2n}}{2^{2m+3n+4} \cdot 3^{n+1} \cdot  m! \cdot n!} \\
\times &   \left[ \, q\frac{d}{dq}\widehat{Z_0}(q) \; \sum_{k=0}^n (-1)^k \,  \binom{n}{k} \, \widehat{Z_0}(q)^{m+n-k}\; \widehat{\mathcal{E}^{k}}[H(8\tau)] \right]_{q^0}  \;,
\end{split}
\end{equation}
where $\widehat{Z_0}(q)$ was defined in (\ref{hatZ0}) and we have set
\begin{equation*}
 \widehat{\mathcal{E}^{k}}[H(8\tau)] = \frac{\eta(8\tau)^3}{\left(\Theta_2(\tau)\Theta_3(\tau)\right)^{2k+2}} \;  \mathcal{E}^k_{\frac{1}{2}} \left[ {H}(8\tau) \right] \;.
\end{equation*}


\begin{thebibliography}{GKZ}

\bibitem{Conway}
\emph{J. H. Conway and S. P. Norton}, Monstrous Moonshine,
Bull. London Math. Soc. \textbf{11} (1979), 308-339.

\bibitem{Borcherds}
\emph{R. E. Borcherds}, Monstrous Moonshine and Monstrous Lie superalgebras,
Invent. Math. \textbf{109} (1992), 405-444.

\bibitem{BF} {\em J. H. Bruinier and J. Funke},
On two geometric theta lifts, Duke Math. J. {\bf 125} (2004), 
45-90.

\bibitem{CDH}
{\em M. C. N. Cheng, J. F. R. Duncan, J. A. Harvey},
Umbral Moonshine,
 arXiv:1204.2779v2 [math.RT].

\bibitem{DonaldsonKronheimer}
\emph{S. K. Donaldson and P. B. Kronheimer},
The geometry of four-manifolds,
Oxford Mathematical Monographs, Oxford Science Publications, The Clarendon Press, Oxford University Press, New York, 1990.


\bibitem{EOTY}
\emph{T.~Eguchi, H.~Ooguri, A.~Taormina and S.~K.~Yang},
Superconformal Algebras And String Compactification On Manifolds With SU(N) Holonomy,'
 Nuclear Phys. B \textbf{315} (1989), no. 1, 193-221. 

\bibitem{EST}
\emph{T.~Eguchi, Y.~Sugawara, A.~Taormina},
Modular forms and elliptic genera for ALE spaces. 
Adv. Stud. Pure Math., \textbf{61}, Math. Soc. Japan, Tokyo, 2011.

\bibitem{EguchiHikami}
\emph{T.~Eguchi, K.~Hikami},
Superconformal algebras and mock theta functions. II. Rademacher expansion for K3 surface.
Commun. Number Theory Phys. \textbf{3} (2009), no. 3, 531-554. 

\bibitem{EOT}
\emph{T.~Eguchi, H.~Ooguri, Y.~Tachikawa},
Notes on the K3 surface and the Mathieu group M24. 
Exp. Math. \textbf{20} (2011), no. 1, 91-96. 

\bibitem{GV}
{\em M. R. Gaberdiel, R. Volpato},
Mathieu Moonshine and Orbifold K3s,
arXiv:1206.5143v1 [hep-th].

\bibitem{Goettsche}
\emph{L. G\"ottsche},
Modular forms and Donaldson invariants for $4$-manifolds with $b\sb {+}=1$,
J. Amer. Math. Soc. \textbf{9} (1996), no. 3, 827-843.

\bibitem{GoettscheZagier}
\emph{L. G\"ottsche and D. Zagier}, Jacobi forms and the structure of Donaldson invariants for $4$-manifolds with $b\sb +=1$,
Selecta Math. \textbf{4} (1998), no. 1, 69-115.

\bibitem{MO}
\emph{A.~Malmendier, K.~Ono},
$\mathrm{SO}(3)$-Donaldson invariants of $\mathbb{C}\mathrm{P}^2$ and mock theta functions.
Geometry and Topology. accepted for publication. arXiv:0808.1442 [math.DG].

\bibitem{MW}
\emph{G. Moore and E. Witten},
Integration over the $u$-plane in Donaldson theory,
Adv. Theor. Math. Phys.  \textbf{1}  (1997),  no. 2, 298-387.

\bibitem{OnoCDM} \emph{K. Ono},
Unearthing the visions of a master: Harmonic Maass forms and number
theory, Proceedings of the 2008 Harvard-MIT Current Developments in Mathematics
Conference, Int. Press, Somerville, Ma., 2009, 347-454.

\bibitem{ZagierBourbaki} \emph{D. Zagier}, Ramanujan's mock theta functions
and their applications [d'apr\`es Zwegers and Bringmann-Ono], S\'em. Bourbaki
(2007/2008), Ast\'erisque, No. 326, Exp No. 986, vii-viii, (2010) 143-164.
\bibitem{Zwegers} \emph{S. P. Zwegers}, {mock theta functions},
Ph.D. Thesis, Universiteit Utrecht, 2002.
\end{thebibliography}
\end{document}